\documentclass{amsart}
\usepackage{latexsym}
\usepackage{amsmath,amssymb,amsfonts,amsthm,amscd}
\usepackage{tikz-cd}
\usepackage{geometry}
\usepackage{graphicx}
\usepackage{lscape}
\usepackage{enumerate}
\usepackage{mathrsfs}
\usepackage{comment}
\usepackage{arydshln}
\usepackage{stmaryrd}
\usepackage{mathtools}
\usepackage{braket}
\usepackage{color}
\usepackage[luatex,pdfencoding=auto]{hyperref}
\usepackage{aliascnt}
\usepackage{cleveref}
\crefname{section}{}{}
\creflabelformat{section}{\S#2#1#3}
\crefname{part}{Part}{Parts}
\crefname{equation}{}{}
\crefname{align}{}{}
\crefname{enumi}{}{}
\creflabelformat{enumi}{(#2#1#3)}
\newtheorem{thm}{Theorem}[section]
\crefname{thm}{Theorem}{Theorems}
\newaliascnt{cor}{thm}

\aliascntresetthe{cor}
\crefname{cor}{Corollary}{Corollaries}
\newaliascnt{prop}{thm}

\aliascntresetthe{prop}
\crefname{prop}{Proposition}{Propositions}
\newaliascnt{lem}{thm}

\aliascntresetthe{lem}
\crefname{lem}{Lemma}{Lemmas}
\newaliascnt{clm}{thm}

\aliascntresetthe{clm}
\crefname{clm}{Claim}{Claims}

\theoremstyle{definition}
\newaliascnt{defi}{thm}

\aliascntresetthe{defi}
\crefname{defi}{Definition}{Definitions}
\newaliascnt{ex}{thm}

\aliascntresetthe{ex}
\crefname{ex}{Example}{Examples}

\theoremstyle{remark}
\newaliascnt{rem}{thm}

\aliascntresetthe{rem}
\crefname{rem}{Remark}{Remarks}

\newcommand{\bC}{\mathbb{C}}

\newcommand{\bE}{\mathbb{E}}

\newcommand{\bQ}{\mathbb{Q}}

\newcommand{\bZ}{\mathbb{Z}}

\newcommand{\ol}{\overline}

\newcommand{\id}{\mathrm{id}}

\newcommand{\sing}{\mathrm{sing}}

\DeclareMathOperator{\Ima}{Im}

\DeclareMathOperator{\CH}{CH}

\title{Asymptotics of local height pairing without Hodge conjecture}
\author{Yuta Nakayama}
\date{\today}
\begin{document}

\subjclass[2020]{Primary: 14G40; Secondary: 14C25, 14C30}
\begin{abstract}
We remove the assumption of the Hodge conjecture from the asymptotic formula for the Archimedean local height established in our previous work.
We also remove other minor assumptions.
We achieve this by exploiting the cohomological nature of Be\u ilinson's local intersection index rather than arguing motivically.
\end{abstract}
\maketitle
\setcounter{tocdepth}{1}
\tableofcontents

\section{Introduction}

Suppose that \(X\to S\) is a family of smooth projective varieties over a Riemann surface that degenerates at \(s\in S(\bC)\).
Let \(W, Z\) be cycles whose codimensions sum up to \(\dim X\), whose supports do not meet in the generic fiber, and whose cycle classes in a general fiber vanish.
In \cite{NakAsym}, we proved, under additional assumptions, an asymptotic formula of the form
\begin{equation} \label{Main}
    h(t) - \langle W, Z\rangle_{\mathrm{a}, s}\log |t| = O(1),
\end{equation}
where \(t\) is a local coordinate around \(s\), the quantity \(h(t)\) is the Archimedean part of Be\u ilinson--Bloch height pairing, and \(\langle W, Z\rangle_{\mathrm{a}, s}\) is Be\u ilinson's local intersection index.
We remove unnecessary assumptions in this paper with the help of AI.
It identified a similar \(p\)-adic theorem by Scholl \cite[Theorem 5.3]{SchHei}.

Compared to our previous work, the argument here is simpler because it relies on cohomological rather than motivic arguments.
Previously, we aimed at a conceptual reformulation of \cref{Main} as a comparison of line bundles.
One of them is the line bundle by Bloch \cite{BloBiext} and Seibold \cite{SeiBier}, defined via higher Chow groups.
The other is that by Hain \cite{HainBiext} and Brosnan--Pearlstein \cite{BPJump}, based on Hodge theory and blended (or mixed) extensions, known as <<extensions panach\'ees>> in \cite[Expos\'e IX, \S 9.3]{SGA7}.
The comparison over the smooth locus uses the cycle class map from higher Chow groups to Deligne cohomology and by the isomorphism between Deligne cohomology and absolute Hodge cohomology.
On the singular locus, we needed to use mixed Hodge modules.

In our previous work, we assumed that all the special fibers of \(X\to S\) were reduced simple normal crossing divisors.
We also imposed certain proper-intersection hypotheses.
These would be manageable by blowups.
However, the most substantial assumption in the theorem was the Hodge conjecture.
This was used to modify \(W\) to a rational cycle \(\hat{W}\) whose cycle classes on all irreducible components of all fibers vanish.
We needed \(\hat{W}\) when we formulated Bloch's line bundle as \(\bE_{\hat{W}, Z}\), a fiber of Bloch's biextension.
On the other hand, the formulation as a comparison of line bundles becomes a little artificial without the conjecture.

We recall Hain's blended extension of variations of Hodge structures, obtained from two disjoint cycles of complementary dimensions, in \cref{sec:recall}.
We present the new argument in \cref{sec:main}.

\subsection*{Acknowledgments}
The author thanks his advisor S.-W. Zhang for suggesting that the author should remove the Hodge conjecture using AI.

\subsection*{AI usage}
The argument was outlined by ChatGPT 6 Astra.
The author worked out the remaining minor details and wrote the manuscript.
ChatGPT 6 Astra proofread the manuscript.

\section{Notations}
If \(f\colon X\to Y\) is a morphism of schemes over \(\bC\), we also write \(f\colon X(\bC)\to Y(\bC)\).
For a smooth quasi-projective scheme \(T\) over \(\bC\), we also have the cycle class maps
\[
    *_\sing \colon \CH^p(T)\to H^{2p}(T(\bC), \bZ(p))
\]
for Betti cohomology.
We use the same notation for the corresponding cycle class maps with support.
For an algebraic cycle \(W\), let \(|W|\) be its support.

In general, our sign conventions follow \cite{HartsRes}.

\section{Hain's blended extension} \label{sec:recall}

We recall the admissible ``biextension'' of variations of Hodge structures by Hain \cite{HainBiext} and Brosnan--Pearlstein \cite{BPJump}.
We follow \cite[\S 9]{NakAsym}.
Let \(m, n\) be nonnegative integers such that \(m + n = d + 1\).
Suppose that \(\pi\colon X\to S\) is a smooth projective morphism of relative dimension \(d\) of smooth \(\bC\)-schemes.
Assume that \(W\) (resp.~\(Z\)) is a cycle of codimension \(m\) (resp.~\(n\)) that intersects properly with the fibers of \(\pi\).
Also assume \(|W|\cap |Z| = \emptyset\).

First, set \(i_Z\colon |Z|\to X\) and \(j_Z\colon X\backslash |Z|\to X\) to be the inclusions.
We have the distinguished triangle
\[
    R\pi_*i_{Z, *}Ri_Z^!\bZ(n)\to R\pi_*\bZ(n)\to R\pi_{Z, *}\bZ(n)\xrightarrow{+1},
\]
where we let \(\pi_Z\coloneqq \pi\circ j_Z\).
This induces the first row of the commutative diagram
\begin{equation} \label{E_Z}
\begin{tikzcd}[cramped, column sep=small]
R^{2n-1}\pi_* i_{Z,*} Ri_Z^!\bZ(n) \arrow[r] &
R^{2n-1}\pi_* \bZ(n) \arrow[r] &
R^{2n-1}\pi_{Z,*}\bZ(n) \arrow[r] &
R^{2n}\pi_* i_{Z,*} Ri_Z^!\bZ(n) \arrow[r] &
R^{2n}\pi_* \bZ(n) \\
0 \arrow[r] \arrow[u, equal]&
R^{2n-1}\pi_* \bZ(n) \arrow[r] \arrow[u, equal] \arrow[d] &
E'_Z \arrow[r] \arrow[u] \arrow[d] &
\bZ_{S(\bC)} \arrow[r] \arrow[u, "{-[Z]}_{\mathrm{sing}}"] &
0 \arrow[u] \\
0 \arrow[r] &
\displaystyle\frac{R^{2n-1}\pi_* \bZ(n)}{\mathrm{tors}} \arrow[r] &
E_Z \arrow[r] &
\bZ_{S(\bC)} \arrow[r] \arrow[u, equal] &
0
\end{tikzcd}
\end{equation}
made of exact sequences, where we define the second row by the pullback by
\[
    \bZ_{S(\bC)}\xrightarrow{-[Z]_{\sing}} H^{2n}_{|Z|(\bC)}(X(\bC), \bZ(n))_{S(\bC)}\to R^{2n}(\pi\circ i_Z)_*Ri_Z^! \bZ(n).
\]
The cup product
\[
    R^{2n-1}\pi_{Z, *}\bZ (n)\times R^{2m-1}\pi_{Z, !}\bZ (m)\to R^{2d}\pi_{Z, !} \bZ(d + 1)\to \bZ(1)_{S(\bC)}
\]
induces
\begin{align} \label{2dual}
    R^{2m-1}\pi_{Z, !}\bZ (m)\to (R^{2n-1}\pi_{Z, *} \bZ(n))^\vee \to E_Z^\vee,
\end{align}
where \(*^\vee\) is the duality with respect to \(\bZ(1)_{S(\bC)}\).

Second, set \(i_W\colon |W|\to X\), \(j_W\colon X\backslash |W|\to X\) and \(j\colon X\backslash (|W|\cup |Z|)\to X\backslash |Z|\) to be the inclusions.
Consider the distinguished triangle
\[
    R\pi_*i_{W, *}Ri_W^!\bZ(m)\to R\pi_{Z, !}\bZ(m)\to R\pi_{Z, !}Rj_*\bZ(m)\xrightarrow{+1}.
\]
This gives rise to the exact sequence appearing in the top row of the commutative diagram
\[
\begin{tikzcd}[cramped]
    0 \arrow[r] & R^{2m-1}\pi_{Z, !}\bZ(m) \arrow[r] \arrow[d, equal] & R^{2m-1}\pi_{Z, !}Rj_*\bZ(m) \arrow[r] & R^{2m}\pi_* i_{W, *}Ri_W^!\bZ(m) \arrow[r] & R^{2m}\pi_{Z, !} \bZ(m) \\
    0 \arrow[r] & R^{2m-1}\pi_{Z, !}\bZ(m) \arrow[r]                  & B_W(Z) \arrow[u] \arrow[r]                  & \bZ_{S(\bC)} \arrow[r] \arrow[u, "{-[W]_{\sing}}"] & 0. \arrow[u]
\end{tikzcd}
\]
The pullback as in \cref{E_Z} defines the second row, and the rightmost square commutes for dimension reasons.

We now form the pushout
\[
\begin{CD}    
    0 @>>> R^{2m-1}\pi_{Z, !}\bZ(m)@>>> B_W(Z) @>>> \bZ_{S(\bC)} @>>> 0 \\
    @.    @V{\cref{2dual}}VV           @VVV        @|              @.\\
    0 @>>> E_Z^\vee                @>>> B_{W, Z} @>>> \bZ_{S(\bC)} @>>> 0.
\end{CD}
\]
Poincar\'e duality gives the short exact sequence
\[
    0\to \bZ(1)_{S(\bC)}\to E_Z^\vee \to R^{2m-1}\pi_*\bZ (m)/\mathrm{tors} = H \to 0.
\]
We equip \(B_{W, Z}\) with the following structure of a blended extension:
\begin{equation} \label{signZ}    
    0\to \bZ(1)_{S(\bC)}\xrightarrow{-1} E_Z^\vee \to B_{W, Z}.
\end{equation}
We have \(B_{W, Z}/\bZ(1) = E_W\), where \(E_W\) is defined analogously to \(E_Z\).

\section{Main theorem} \label{sec:main}

Assume that \(S\) is a smooth irreducible curve over \(\bC\).
Take \(\pi\colon X\to S\), a flat projective morphism of schemes of relative dimension \(d\) such that \(X\) is smooth over \(\bC\).
Let \(W\) and \(Z\) be cycles in \(X\) of codimensions \(m\) and \(n\), respectively, with \(m+n=d+1\).
Assume that they are homologically trivial on general fibers and that they do not meet in the generic fiber.

Take a closed point \(a\colon s\to S\).
Let \(b\colon S\backslash \{s\}\to S\) be the open immersion.
We identify sheaves on \(\{s\}\) with abelian groups.
Similarly to \cite[Lemma 2.2.2.c)]{BeiHeiCyc}, we have
\[
    [W]_{\sing} = 0\in a^*R^{2m}b_* R\pi_*\bQ(m).
\]
This enables us to take
\[
    \alpha_1\in a^*R^{2m-1}b_* R\pi_{W, *}\bQ(m)
\]
that maps to \([W]_{\sing}\in a^*R^{2m}b_* R\pi_*i_{W, *}Ri_W^!\bQ(m)\) under the connecting homomorphism, where \(\pi_W \coloneqq \pi\circ j_W\).
Our \(\langle W, Z\rangle_{\mathrm{a}, s}\) is the image of \(\alpha_1\cup [Z]_{\sing}\) under the following map \cite[Lemma-Definition 2.1.1]{BeiHeiCyc} \cite[Proposition B.4]{LLMot}:
\begin{equation} \label{Around2.1.1}
    a^*R^{2d + 1}b_* R\pi_{W, *}j_W^*i_{Z, *}Ri_Z^!\bQ(d + 1)\to a^*R^{2d + 1}b_* R\pi_*\bQ(d + 1) \to a^*R^{1}b_* \bQ(1) = \bQ.
\end{equation}

\begin{thm}
Suppose that \(S\) is a smooth irreducible curve over \(\bC\).
Also suppose that \(\pi\colon X\to S\) is a flat projective morphism of schemes of relative dimension \(d\) such that \(X\) is smooth over \(\bC\).
Let \(W\) and \(Z\) be cycles in \(X\) of codimensions \(m\) and \(n\), respectively, with \(m+n=d+1\).
Assume that they are homologically trivial on general fibers and that they do not meet in the generic fiber.
Take a closed point \(s\in S\).
Let
\[
    p\colon \{z\in \bC\mid |z|<1\}\to S(\bC)
\]
be a local coordinate in a sufficiently small region around \(p(0) = s\).
For \(t\neq 0\) in the source of \(p\), let \(h(t)\) be the height pairing of \(W_{p(t)}, Z_{p(t)}\) in the sense of \cite[p.~860]{HainBiext}.
Then
\[
    h(t) - \langle W, Z\rangle_{\mathrm{a}, s}\log |t|
\]
extends continuously to \(t = 0\), where \(\langle W, Z\rangle_{\mathrm{a}, s}\) is the local intersection index defined in \cite[\S 2]{BeiHeiCyc}.
\end{thm}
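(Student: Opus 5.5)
Over \(U = S\backslash(\{s\}\cup\text{bad locus})\), the biextension \(B_{W,Z}\) of \cref{sec:recall} is an admissible variation of mixed Hodge structure on the punctured disc \(p(\{0<|t|<1\})\), with graded pieces \(\bZ(1)\), \(H\), \(\bZ\). By \cite{HainBiext}, \(h(t)\) equals, up to sign convention, \(\log\) of the norm of a natural section of the associated biextension line bundle. Concretely, \(h(t) = \mathrm{Re}\) of the period obtained by pairing a lift of \(1\in\bZ\) that is real in the Hodge sense with \(F^0\), and projecting to \(\bC/\bZ(1)\). The plan is to apply the asymptotics of admissible biextension variations, which go back to Pearlstein and Brosnan--Pearlstein \cite{BPJump}, after shrinking \(S\) so that \(s\) is the only degenerate point.

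For a unipotent (after base change; the constant is stable under finite base change up to the ramification factor, which is handled since both sides scale) admissible biextension with local monodromy \(T=e^N\), the \(\mathrm{SL}_2\)/nilpotent orbit theorem gives
\[
h(t) = -\mu\log|t| + O(1) \quad\text{and continuous at }0,
\]
where \(\mu\in\bQ\) is the \(\bZ(1)\)-component of \(N\) applied to a lift of \(1\in \bZ\) into \(W_0\), taken modulo the image of \(N\) on \(H\)-lifts. This is the ``height jump''-free part, and the correction from the Hodge-theoretic jump is bounded (continuity follows from the admissibility/limit MHS argument in \cite{BPJump}, and \cite[\S 9]{NakAsym} already recorded this). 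In this way the analytic half reduces to identifying the topological invariant \(\mu\), defined purely from the local system \(B_{W,Z}\) near \(s\), with \(\langle W,Z\rangle_{\mathrm{a},s}\).

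The core step, and the main obstacle, is therefore cohomological. The invariant \(\mu\) is computed as follows: lift \(1\) to a local section \(\beta\) of \(B_W(Z)\otimes\bQ\) on the punctured disc, i.e.\ a class in \(R^{2m-1}\pi_{Z,!}Rj_*\bQ(m)\) mapping to \(-[W]\). Then take \((T-1)\beta\in R^{2m-1}\pi_{Z,!}\bQ(m)\), push it to \(E_Z^\vee\), and evaluate it on a lift of \(1\) in \(E_Z\), namely \(-[Z]\).

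To compare this with \(\langle W,Z\rangle_{\mathrm{a},s}\), I would translate the stalks \(a^*R^q b_*\) into invariants and coinvariants on the punctured disc, using \(a^*R^1b_*\bQ(1)\cong \bQ\) via the map \(T-1\). The choice of \(\alpha_1\) lifting \([W]\) in \(a^*R^{2m-1}b_*R\pi_{W,*}\) then corresponds exactly to a monodromy-invariant lift, which is possible by the vanishing of \([W]_\sing\) there. The difference between a monodromy-invariant \(\alpha_1\) and an arbitrary fiberwise lift \(\beta\) is recorded by the degree-one part of \(a^*Rb_*\), i.e.\ by \(T-1\). Thus cupping with \([Z]\) and applying the trace \cref{Around2.1.1} should yield \(\pm\) the same pairing \(\langle (T-1)\beta, [Z]\rangle\).

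The delicate points are the following. First, the Leray-type identification of \(a^*R^{2d+1}b_*R\pi_*\) with \(H^1\) of the punctured disc with coefficients in \(R^{2d}\pi_*\) on nearby fibers. Second, compatibility of supports, since \(\pi_{W,*}\) versus \(\pi_{Z,!}\) requires \(|W|\cap|Z|\) to be disjoint over the punctured disc, which holds after shrinking. Third, tracking all signs, including the \(-1\) in \cref{signZ} and the \(-[Z]\), \(-[W]\) conventions. I would fix signs at the end by checking a trivial example (e.g.\ \(d=0\), points on a family of curves degenerating), where both sides are explicit.
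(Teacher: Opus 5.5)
Your outline has the same architecture as the paper's proof: Hain's identification of \(h(t)\) with the height of \(B_{W,Z,\bQ}\), then the one-variable Brosnan--Pearlstein asymptotics, then identification of the \(\bQ(1)\)-part of the monodromy with Beilinson's index. However, the last step is the entire content of the proof, and you only assert it (``should yield \(\pm\) the same pairing''). The recipe you give for it is also not well defined as written.

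\textbf{The choice of lifts.} The invariant must be read off from a lift \(v_0\) of \(1\) whose image in \(E_{W,\bQ}\) is \(T^N\)-invariant, so that \(T^Nv_0-v_0\in\bQ(1)\). For an arbitrary lift \(v\), a ``\(\bZ(1)\)-component'' of \(Nv\) presupposes a splitting of \(E_Z^\vee\to H\). Quotienting by the \(\bZ(1)\)-components of \(N\) on lifts of elements of \(H\) does not repair this.

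In your core step, likewise, \(\beta\) and the lift \(e_Z\) of \(1\) in \(E_Z\) cannot both be arbitrary. Even starting from correct choices, replacing them by \(\beta+h\) and \(e_Z+g\) with \(g\in R^{2n-1}\pi_*\bQ(n)\) changes \(\langle (T-1)\beta, e_Z\rangle\) by \(\langle (T-1)h, g\rangle\). This is nonzero for suitable \(h, g\) as soon as the monodromy on \(H\) is nontrivial.

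Also, \(e_Z\) is a class on \(X_t\backslash |Z_t|\) whose boundary is \(-[Z]\); it is not \(-[Z]\) itself. Trading the pairing with \(e_Z\) for an integral over \(Z_t\) is a compatibility between duality and connecting maps that needs proof. It only makes sense once \((T-1)\beta\) is known to come from \(H^{2m-2}(|Z_t|)\), that is, once \(\beta\) is chosen as a lift, relative to \(|Z_t|\), of the invariant class \(\ol{\alpha_1}\).

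\textbf{The missing bridge.} What is genuinely missing is the link from this fibrewise quantity to \(\alpha_1\cup[Z]_\sing\). Your \(\alpha_1\) is a class on \(X\backslash |W|\) over the punctured disc, whereas \(\beta\) lives in \(H^{2m-1}(X_t\backslash |W_t|, |Z_t|)\). So ``the difference between \(\alpha_1\) and \(\beta\) is recorded by \(T-1\)'' has no meaning.

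The paper supplies the mechanism as follows.
\begin{enumerate}
\item Over the punctured disc, \(0\to\bQ(1)\to B_{W,Z,\bQ}\to E_{W,\bQ}\to 0\) is the pullback to \(E_{W,\bQ}\) of a pushout. One pushes out the relative cohomology sequence of \((X\backslash |W|, |Z|)\) along \([Z]_\sing\colon R^{2m-2}(\pi\circ i_Z)_*\bQ(m)\to\bQ(1)\), using \(j_{Z,!}Rj_*j^*j_Z^*\simeq Rj_{W,*}j'_!j'^*j_W^*\).
\item Hence the connecting map \(\ol{\alpha_1}\mapsto T^Nv_0-v_0\) factors through the connecting map of the relative sequence. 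The latter lands in \(H^1\) of the disc with coefficients in \(R^{2m-2}(\pi\circ i_Z)_*\bQ(m)/\Ima(\mathrm{res})\).
\item Since \(R^{2m-1}(\pi\circ i_Z)_*\bQ(m)=0\) for dimension reasons, \(H^1\) with coefficients in \(R^{2m-2}(\pi\circ i_Z)_*\bQ(m)\) is all of \(H^{2m-1}\) of \(Z\) over the punctured disc.
\item A derived-category diagram chase shows that, up to the sign of the rotation axiom, \(\ol{\alpha_1}\) goes to the image of the restriction \(\alpha_1|_Z\). Integrating over \(Z\) is then exactly the trace of \(\alpha_1\cup[Z]_\sing\).
\end{enumerate}
In a cochain model this reads as follows. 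Write \(\alpha_1=(a,b)\) with \(da=0\) and \((T^N-1)a=\pm db\), adjusted by a coboundary so that \(a|_{Z_t}=0\). Then \(\beta=(a,0)\) is a relative lift, and \((T^N-1)\beta=\pm\delta(b|_{Z_t})\). This is the identity you need to prove, and your proposal does not.

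\textbf{Signs.} The statement is sign-sensitive, so ``\(\pm\) plus one example'' is legitimate only after you have proved equality up to a sign independent of all data. That already requires the chase above. The paper instead tracks the signs:
\begin{itemize}
\item the \(-[Z]\) in \(E_Z\) cancels the \(-1\) in the blended structure;
\item the snake-lemma sign cancels \(-[W]\);
\item the remaining sign comes from the anticommuting triangle.
\end{itemize}
Your test case is \(d=1\), \(m=n=1\) in the paper's indexing; for \(d=0\), one of \(W, Z\) is forced to be zero.
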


\begin{proof}
By \cite[(3.3.12)]{HainBiext}, the Archimedean height is obtained as the height of the blended extension \(B_{W,Z,\mathbb Q}\) in the sense of \cite{BPJump}.
Put \(V\coloneqq B_{W, Z, \bQ, p(t)}\) for some \(t\neq 0\), well-defined as long as the image of \(p\) is sufficiently small.
Set \(T_V\) to be the monodromy of \(V\).
More generally, we use \(T\) as a notation for monodromy around \(s\).
Let \(N\) be a positive integer such that the \(N\)-th power of the monodromy on \(R^{2m - 1}\pi_*\bQ(m)\) around \(s\) is unipotent.
As in \cite[Remark 20]{BPJump}, there exist \(v_0\in V\) and a subspace \(U\subset V\) such that
\begin{enumerate}
    \item \(V = \bQ v_0\oplus U\oplus \bQ(1)\),
    \item \((T_V^N - \id)(U) \subseteq U\),
    \item\label{MuDef} \(T_V^N(v_0) - v_0 \in \bQ(1)\), and
    \item \(v_0\mapsto 1\), \(U\to 0\) by the surjection \(V \to \bQ\).
\end{enumerate}
It suffices to show that \(T_V^N(v_0) - v_0 = -2\pi \sqrt{-1}N\langle W, Z\rangle_{\mathrm{a}, s}\).

Put
\begin{align*}
p_N&\colon \{z\in \bC\mid 0<|z|<1\}\to S(\bC), \quad z\mapsto p(z^N).
\end{align*}
Let \(\ol{\alpha_1}\) be the image of \(v_0\) under \(B_{W_{p(t)}, Z_{p(t)}, \bQ}\to E_{W_{p(t)}, \bQ}\) for some \(t\neq 0\) in the disk.
The condition \cref{MuDef} amounts to saying that \(\ol{\alpha_1}\) is invariant under \(T^N\), defining \(\ol{\alpha_1}\in \Gamma p_N^*E_\bQ\) as well as \(\ol{\alpha_1}\in \Gamma p_N^* R^{2m-1}\pi_{W, *}\bQ(m)\).
Let \(\alpha_1\) be a lift of this element to \(R^{2m-1}\Gamma  p_N^* R\pi_{W, *}\bQ(m)\).

We have \(R\Gamma p_N^* L = (\dots 0\to L_{p(t)}\xrightarrow{T^N - 1} L_{p(t)}\to 0\dots)\) for a local system \(L\) on the image of \(p\).
Therefore, \(T^N v_0 - v_0\) can be seen as the image of \(\ol{\alpha_1}\) under the connecting homomorphism \(\Gamma p_N^* E_\bQ\to R^1 \Gamma p_N^* \bQ(1) = \bQ\) associated with the exact sequence
\begin{equation} \label{B_WZ2E_W}
0 \to \bQ(1) \to B_{W, Z, \bQ} \to E_{W, \bQ} \to 0,
\end{equation}
where the sign in the standard convention but not in the proof of the snake lemma \cite[(1.3.3)]{ConBaseChange} is cancelled by the sign in the definition of \(E_W\) in \cref{E_Z}.

It suffices to show that this image of \(\ol{\alpha_1}\) in \(\bQ\) coincides with the description of \(\langle W, Z\rangle_{\mathrm{a}, s}\) around \cref{Around2.1.1} up to a factor of \(N\).

As in \cite[(32)]{NakAsym}, the sequence \cref{B_WZ2E_W} is obtained by pulling back the second row of the pushout diagram
\[
\begin{tikzcd}[cramped, column sep=small]
        R^{2m-2} \pi_{W, *}\bQ(m)
                \arrow[r, "{\mathrm{res}}"]
                \arrow[d]
            & R^{2m-2}(\pi\circ i_Z)_*\bQ(m)
                \arrow[r]
                \arrow[d, "{[Z]_{\sing}}"]
            & R^{2m-1} \pi_{W, *}Rj'_!\bQ(m)
                \arrow[r]
                \arrow[d]
            & R^{2m-1} \pi_{W, *}\bQ(m)
                \arrow[r]
                \arrow[d, equal]
            & 0 \\
        0
                \arrow[r]
            & \bQ(1)
                \arrow[r]
            & *
                \arrow[r]
            & R^{2m-1} \pi_{W, *}\bQ(m)
                \arrow[r]
            & 0
\end{tikzcd}
\]
of exact sequences over the punctured image of \(p\), with the following explanations.
\begin{itemize}
\item In this diagram, \([Z]_{\sing}\) appears without a minus sign because the minus signs in \cref{E_Z} and \cref{signZ} cancel.
    The sign discrepancy denoted by \(+\) in \cite[(32)]{NakAsym} comes from the difference between \(B_{W, Z}\) and \(B_{W, Z}^\circ\) in \cite{NakAsym}.
\item We put \(j'\colon X\backslash (|W|\cup |Z|)\to X\backslash |W|\).
    We have the isomorphism \(j_{Z, !}Rj_*j^*j_Z^*\simeq Rj_{W, *}j'_!j'^*j_W^*\) by applying the snake lemma to
    \[
        \begin{CD}
            0 @>>> i_{W, *}i_W^! @>>> \id @>>> j_{W, *}j_W^* @. \\
            @.      @VVV            @|         @VVV         @. \\
            0 @>>> j_{Z, !}j_Z^* @>>> \id @>>> i_{Z, *}i_Z^* @>>> 0.
        \end{CD}
    \]
\end{itemize}
The connecting homomorphism in question is therefore induced by the lower-left vertical arrow in the diagram
\[
    \begin{tikzcd}
        R^{2m-1}\Gamma p_N^*R\pi_{W, *}\bQ(m) \arrow[d] \arrow[rd] \arrow[r, "{[Z]_{\sing}}"]& R^{2d + 1}\Gamma p_N^*R\pi_{W, *}j_W^*i_{Z, *}Ri_Z^!\bQ(d + 1) \arrow[rd, equal] & \\
        \Gamma p_N^*R^{2m-1}\pi_{W, *}\bQ(m) \arrow[d]                                     & R^{2m-1}\Gamma p_N^*R(\pi\circ i_Z)_*\bQ(m) \arrow[r, "{[Z]_{\sing}}"] \arrow[ld] & * \\
        R^1\Gamma p_N^* (R^{2m-2}(\pi\circ i_Z)_*\bQ(m)/\Ima \mathrm{res}) \arrow[r, "{[Z]_{\sing}}"] & R^1\Gamma p_N^*R^{2d}\pi_{W, *}j_W^*i_{Z, *}Ri_Z^!\bQ(d + 1), \arrow[ru] &
    \end{tikzcd}
\]
with the following details.
\begin{itemize}
    \item The diagram commutes except for the triangle on the left, which anticommutes because of the sign convention in \cite[(TR2)]{HartsRes}.
    \item The down-left morphism is induced by the fact that \(R^{2m-1}(\pi\circ i_Z)_*\bQ(m) = 0\) so that \(R^1\Gamma p_N^* R^{2m-2}(\pi\circ i_Z)_*\bQ(m) = R^{2m-1}\Gamma p_N^*R(\pi\circ i_Z)_*\bQ(m)\).
\end{itemize}
Our \(\alpha_1\) belongs to the stalk of the sheaf in the upper-left corner of the diagram.
Its image under the vertical map is \(\ol{\alpha_1}\).
Starting from \(\alpha_1\), going to the right and then down gives us \(\langle W, Z\rangle\in \bQ = R^1\Gamma \bQ(1)\), up to multiplication by \(N\) due to \(p_N^*\).
Following the two downward arrows and then the rightward arrow gives us the desired rational number in \(\bQ = R^1\Gamma \bQ(1)\).
\end{proof}


\end{document}